\newcommand{\refpart}[1]{{\it (#1)}}  
\newtheorem{theorem}{Theorem}[section]
\newtheorem{lemma}[theorem]{Lemma} 
\newtheorem{propose}[theorem]{Proposition} 
\newtheorem{corollary}[theorem]{Corollary} 
\newtheorem{definition}[theorem]{Definition} 
\newtheorem{remark}[theorem]{Remark} 
\newcommand{\PP}{\mathbb{P}}
\newcommand{\FF}{\mathbb{F}}
\newcommand{\RR}{\mathbb{R}}
\newcommand{\CC}{\mathbb{C}}
\newcommand{\QQ}{\mathbb{Q}}
\newcommand{\ZZ}{\mathbb{Z}}
\newcommand{\XX}{{\cal X}}
\newcommand{\KC}{{\cal K}}
\newcommand{\KG}{\Lambda}
\newcommand{\ia}{\alpha}
\newcommand{\ib}{\beta}
\newcommand{\PSL}{\mbox{\rm PSL}}
\newcommand{\hpg}[5]{{}_{#1}\mbox{\rm F}_{\!#2}\!
  \left(\left.{#3 \atop #4}\right| #5 \right) }
\newcommand{\hpgo}[2]{{}_{#1}\mbox{\rm F}_{\!#2}}
\title{Dihedral evaluations of hypergeometric functions with the Kleinian projective monodromy}
\author{
        Raimundas Vidunas\footnote{
        Vilnius University, Lithuania.
        E-mail: {\sf rvidunas@gmail.com}.}
       }
\date{}       
\begin{document}

\maketitle

\begin{abstract}
Algebraic hypergeometric functions can be compactly expressed as radical or dihedral functions
on pull-back curves where the monodromy group is much simpler. 
This article considers the classical 
$\hpgo32$-functions with the projective monodromy group PSL$(2,\FF_7)$
and their pull-back transformations of degree 21 that reduce the projective monodromy
to the dihedral group $D_4$ of 8 elements.
\end{abstract}

\section{Introduction}

One way to obtain a workable expression for an algebraic hypergeometric function
is to pull-back it to an algebraic curve where the (finite) monodromy group would be simpler, 
say, a finite cyclic group \cite{ViDarb}.
For example,
\begin{align} \label{eq:octa1} 
\hpg{2}{1}{5/24,\;13/24}{5/4}{\frac{108x\,(x-1)^4}{(x^2+14x+1)^3}}
 & =  \frac{1}{1-x}\,\left(1+14x+x^2\right)^{5/8}, \\
\hpg{2}{1}{1/6,\;5/6}{5/4}{\frac{27x\,(x+1)^4}{2(x^2+4x+1)^3}\,} 
& = \frac{\big(1+2x\big)^{1/4}}{1+x}\,\sqrt{1+4x+x^2} \quad
\end{align}
around $x=0$.
Here the $\hpgo21$-functions have the octahedral group $\cong S_4$ 
as the projective monodromy group (of the hypergeometric differential equation). 
The rational arguments of degree 6 reduce the monodromy to small cyclic groups,
as evidenced by the radical (i.e., algebraic power) functions on the right-hand sides of these identities.

If a Fuchsian differential equation $E$ on the Riemann sphere $\CC\PP^1$ has an algebraic solution $f$, 
then $E$ can be transformed by a pull-back transformation with respect 
to an algebraic covering $\varphi:B\to\CC\PP^1$ so that $f$ 
becomes a rational or radical solution on the curve $B$.  
The monodromy representation of the transformed equation then
has an invariant subspace generated by $f$, 
and $\varphi$ is 
a {\em Darboux covering} as defined in \cite{ViDarb}.
The explicit expression of $f$ as a radical 
function on $B$ is called a {\em Darboux evaluation} of $f$.
In \cite{ViDarb}, all tetrahedral, octahedral and icosahedral Schwarz types \cite{Schwarz}
of algebraic $\hpgo21$-functions are exemplified by Darboux evaluations.

Reduction of a finite monodromy group to a dihedral (rather than cyclic) group is worth attention as well.
The degree of the pull-back covering would be generally smaller, and dihedral expressions are still 
compact and practically workable. For example, 
a dihedral expression of octahedral function (\ref{eq:octa1})
is obtained after a cubic transformation \cite[(21)]{ViAGH}:
\begin{align}
\hpg{2}{1}{5/24,\;13/24}{5/4}{\frac{27x\,(x-1)^2}{(3x+1)^3}} 
& =(1+3x)^{5/8}\,\hpg{2}{1}{5/8,\;9/8}{5/4}{x}  \nonumber \\
& = \frac{(1+3x)^{5/8}}{\sqrt{1-x}}\,\hpg{2}{1}{5/8,\;1/8}{5/4}{x} \\
& = \frac{(1+3x)^{5/8}}{\sqrt{1-x}}\,\left(\frac{1+\sqrt{1-x}}2\right)^{\!-1/4}\!. \quad\nonumber
\end{align}
Here the second $\hpgo21$-function has a dihedral monodromy group, 
and is converted using standard transformations  \cite[(17), (2)]{ViAGH}. 
It can be evaluated directly using \cite[(3.1) with $k=-1$]{VidunasDh}.


Algebraic generalized hypergeometric functions $\hpgo{p}{p-1}$ are classified by Beukers and Heckman
\cite{BeuHeck}. One particularly interesting case \cite{Kato11}, \cite{vdPut00} 
is algebraic $\hpgo32$-functions such that the projective monodromy group 
(of their third order Fuchsian equations) is the simple group 
\begin{equation}
\KG=\PSL(2,\FF_7)\cong \mbox{\rm GL}(3,\FF_2)
\end{equation}
with 168 elements. Third order Fuchsian equations with this projective monodromy group
were anticipated by Klein \cite[a footnote in \S9]{Klein79},
and first constructed by Halphen \cite{Halphen84} and Hurwitz \cite{Hurwitz86}.

In \cite[Table 8.3]{BeuHeck}, classes of 
$\hpgo32$-functions with the projective monodromy group $\KG$ are labelled by the numbers 2, 3, 4.
The customary monodromy group (inside ${\rm GL}(3,\CC)$) of their Fuchsian equations 
is the complex reflection group ST24 in the Shephard--Todd classification \cite{STrg},
isomorphic to the central extension $\KG\times(\ZZ/2\ZZ)$.
A classification up to contiguous relations of (Fuchsian equations for) $\hpgo32$-functions 
with the projective monodromy group $\KG$ is given in \cite[Proposition 2.1]{Vid18b}.
It give these six classes and representative $\hpgo32$-functions:
\begin{align} \label{eq:class347}
{\rm (3A)}:  & \quad \hpg32{-\frac{3}{14},\frac{1}{14},\frac9{14}}{\frac13,\;\frac23}{z}; &
{\rm (3B)}:  & \quad  \hpg32{-\frac{1}{14},\frac{3}{14},\frac5{14}}{\frac13,\;\frac23}{z}; \nonumber \\
{\rm (4A)}:  & \quad  \hpg32{-\frac{3}{14},\frac{1}{14},\frac9{14}}{\frac14,\;\frac34}{z}; &
{\rm (4B)}:  & \quad \hpg32{-\frac{1}{14},\frac{3}{14},\frac5{14}}{\frac14,\;\frac34}{z}; \quad\\
{\rm (7A)}:  & \quad \hpg32{-\frac{1}{14},\frac{1}{14},\frac5{14}}{\frac17,\;\frac57}{z}; &
{\rm (7B)}:  & \quad \hpg32{-\frac{1}{14},\frac{1}{14},\frac9{14}}{\frac27,\;\frac67}{z}. \nonumber
\end{align}
Equations of type (3A) are directly related to the modular curve $\XX(7)$, 
and to Klein's quadric curve 
\begin{equation} \label{eq:klein}
X^3Y+Y^3Z+Z^3X=0.
\end{equation}
This is a Riemann surface of genus $g=3$,
with the group of holomorphic symmetries isomorphic to $\KG$.

As shown in \cite{Vid18b}, the projective monodromy $\KG$ of considered Fuchsian equations
can be reduced to $\ZZ/7\ZZ$ by pull-back transformations of degree $\#\KG/7=24$.
The monodromy representation of transformed equations is completely reducible,
hence the pulled-back equations have a basis of radical (i.e., algebraic power) solutions.
This gives Darboux evaluations for all solutions of a considered Fuchsian equation
in terms of the basis radical solutions.

This article presents Darboux coverings of degree 21 that reduce the projective monodromy $\KG$
to the dihedral group $D_4$ with 8 elements. The dihedral group is a 2-Sylow subgroup of $\KG$;
see \cite[p.~66, 92]{Elkies}. The degree 21 covering exists by the Galois correspondence
associated to degree 168 Galois coverings with the monodromy $\KG$.

Let $E_0,E_1$ denote third order differential equations with the projective monodromies $\KG,D_4$, respectively, that and related by a pull-back transformation of degree 21. 
The customary monodromy group of $E_1$ is a central extension of $D_4$, thus a dihedral group as well. 
As there are no irreducible 3-dimensional representations of dihedral groups \cite{GroupRep}, 
the monodromy representation of $E_1$ is reducible. Its one-dimensional invariant
subspace gives a radical solution of $E_1$, hence a Darboux evaluation of a solution
of $E_0$. The two-dimensional invariant subspace of the monodromy of $E_1$ 
leads to {\em dihedral evaluations} of solutions of $E_0$.
This article presents Darboux and dihedral evaluations of representative 
$\hpgo32$-functions with respect to the degree 21 Darboux coverings.

\section{Preliminaries}

Section \ref{sec:hgp} recalls basic knowledge about differential equations for $\hpgo32$-functions, 
their pull-back transformations, and contiguous relations. 
Section \ref{th:3f2cases} characterizes the classification \ref{eq:class347}
of $\hpgo32$-functions with the projective monodromy $\KG$.
Section \ref{sec:darbcovs} introduces Darboux coverings following \cite{ViDarb}
and \cite[\S 2.4]{Vid18b}.

\subsection{Hypergeometric functions}
\label{sec:hgp}

The hypergeometric function $\hpg32{\!\ia_1,\,\ia_2,\,\ia_3\!}{\ib_1,\,\ib_2}{z}$ 
satisfies the differential equation
\begin{align} \label{eq:hpgde}
\! \left(z\frac{d}{dz}+\ia_1\!\right) \! \left(z\frac{d}{dz}+\ia_2\!\right) \! 
\left(z\frac{d}{dz}+\ia_3\!\right) \! Y(z)  \hspace{132pt} \\
=\frac{d}{dz}  \! \left(z\frac{d}{dz}+\ib_1-1\!\right) \!
\left(z\frac{d}{dz}+\ib_2-1\!\right) \! Y(z), \nonumber
\end{align}
minding the commutativity rule $\frac{d}{dz}z=z\frac{d}{dz}+1$.
This is a third order Fucshian equation with three singular points $z=0$, $z=1$, $z=\infty$.
The singularities and local exponents at them are encoded by the generalized Riemann's $P$-symbol:
\begin{equation}
P\left( \begin{array}{ccc|c} 
z=0 & z=1 & z=\infty & \\ \hline
0 & 0 & \ia_1 & \\
1-\ib_1 & 1 & \ia_2 & z \\
1-\ib_2 & \gamma & \ia_3 &
\end{array}\right) 
\end{equation}
with $\gamma=\ia_1+\ia_2+\ia_3-\ib_1-\ib_2$. 
Generically, a basis of local solutions at $z=0$ and $z=\infty$ can be written in terms
of $\hpgo32$-series. Let $M=(m_{i,j})$ denote the matrix
\begin{equation} \label{eq:hypm}
M=\left( \begin{array}{ccc} 
\ia_1 & \ia_1-\ib_2+1 & \ia_1-\ib_1+1 \\
\ia_2 & \ia_2-\ib_2+1 & \ia_2-\ib_1+1 \\
\ia_3 & \ia_3-\ib_2+1 & \ia_3-\ib_1+1
\end{array}\right).
\end{equation}
A generic basis of local solutions at $z=0$ is
\begin{align}
& \hpg32{m_{1,1\,},\,m_{2,1\,},\,m_{3,1}}{\ib_1,\;\ib_2}{z}, \quad
z^{1-\ib_2}\,\hpg32{m_{1,2\,},\,m_{2,2\,},\,m_{3,2}}{2-\ib_2,\;\ib_1-\ib_2+1}{z}, \nonumber\quad \\
& z^{1-\ib_1}\,\hpg32{m_{1,3\,},\,m_{2,3\,},\,m_{3,3}}{2-\ib_1,\;\ib_2-\ib_1+1}{z},
\end{align}
while a generic basis of local solutions at $z=\infty$ is
\begin{align}
z^{-\ia_j}\,\hpg32{\!m_{j,1\,},\;m_{j,2\,},\;m_{j,3}}{\ia_j-\ia_{k}+1,\ia_j-\ia_{\ell}+1}{z}
\end{align}
with $ j\in\{1,2,3\}$ and $\{k,\ell\}=\{1,2,3\}\setminus\{j\}$.
We refer to the set of 6 functions 
formed by 3 local hypergeometric solutions (disregarding a power factor) at $z=0$ 
and 3 such local solutions at $z=\infty$ of the same third order Fuchsian equation 
as {\em companion hypergeometric functions} to each other.

Let $B$ denote an algebraic curve. Let $\varphi(\ldots)$ denote a rational function on $B$;
it defines an algebraic covering $\varphi:B\to\CC\PP^1$.
A {\em pull-back transformation} with respect to $\varphi$ 
of a differential equation for $y(z)$ in $d/dz$ has the form
\begin{equation} \label{algtransf}
z\longmapsto\varphi(\ldots), \qquad y(z)\longmapsto
Y(\ldots)=\theta(\ldots)\,y(\varphi(\ldots)),
\end{equation}
where 
$\theta(\ldots)$ is a radical function 
on $B$.  The equations that differ by a pull-back transformation with respect to 
the trivial covering \mbox{$\varphi(\ldots)=z$} 
are called {\em projectively equivalent}.

There are several algebraic transformations for $\hpgo32$-functions \cite{KatoTR}.
Here are quadratic and cubic transformations:
\begin{align} \label{eq:t32a}
\hpg32{a,a+\frac14,a+\frac12}{b+\frac14,3a-b+1}{-\frac{4z}{(z-1)^2}} \hspace{-40pt} \\  \displaybreak
& =(1-z)^{2a}\,\hpg32{2a,2a-b+\frac34,b-a}{b+\frac14,3a-b+1}{z}\!, \qquad \nonumber \\ 
\label{eq:t32c} \hpg32{a,a+\frac13,a+\frac23}{b+\frac12,3a-b+1}{\frac{27z^2}{(4-z)^3}\!} \hspace{-40pt} \\
& = \left(1-\frac{z}4\right)^{\!3a}\hpg32{3a,b,3a-b+\frac12}{2b,6a-2b+1}{z}\!.  \nonumber
\end{align}
They can be understood as pull-back transformations between Fuchsian equations
(particularly, (\ref{eq:hpgde})) for hypergeometric functions \cite{ViAGH}, \cite{KatoTR}.
The involved equations have the local exponent $c=1/2$  at $z=1$, 
and the quadratic or cubic arguments $\varphi(z)$
on the left-hand sides have properly branching points in the fiber $\varphi=1$. 
This helps the number of singularities of the pulled-back equation to equal merely 3.

Two $\hpgo32$-functions whose parameters $\ia_1,\ia_2,\ia_3,\ib_1,\ib_2$ 
differ respectively by integers 
are called {\em contiguous} to each other.
This defines a  {\em contiguity equivalence} relation on the $\hpgo32$-functions.
For example, differentiating a $\hpgo32$-function gives a contiguous function, generically:
\begin{equation}
\frac{d}{dz}\hpg32{\ia_1,\ia_2,\ia_3}{\ib_1,\ib_2}{z}=\frac{\ia_1\ia_2\ia_3}{\ib_1\ib_2}\,
\hpg32{\ia_1+1,\ia_2+1,\ia_3+1}{\ib_1+1,\ib_2+1}{z},
\end{equation}
Fuchsian equations of contiguous functions have the same monodromy, generically.
For a generic set of four contiguous $\hpgo32$-functions there is a linear {\em contiguous relation} 
between them \cite{Rainville}. 
For example, differential equation (\ref{eq:hpgde}) can be rewritten as a contiguous
relation between $\hpg32{\!\ia_1+n,\,\ia_2+n,\,\ia_3+n\!}{\ib_1+n,\;\ib_2+n}{z}$ with $n\in\{0,1,2,3\}$. 
Consequently, a contiguous function to a generic $\hpgo32$-function $F$ 
can be expressed linearly in terms of $F$ and its first and second derivatives (thus, as a gauge transformation).
In particular, we have 
\begin{align}
\hpg32{\ia_1+1,\ia_2,\ia_3}{\ib_1,\ib_2}{z} = & 
\left(1+\frac{z}{\ia_1}\,\frac{d}{dz}\right) \hpg32{\ia_1,\ia_2,\ia_3}{\ib_1,\ib_2}{z},\\
\hpg32{\ia_1,\ia_2,\ia_3}{\ib_1-1,\ib_2}{z} = & \left(1+\frac{z}{\ib_1-1}\,\frac{d}{dz}\right)
\hpg32{\ia_1,\ia_2,\ia_3}{\ib_1,\ib_2}{z},
\end{align}

\subsection{Monodromy groups}
\label{th:3f2cases}

Let $E$ denote a 3rd order Fuchsian equation on the Riemann sphere $\CC\PP^1$.
Suppose that $f_1,f_2,f_3$ is a basis of its solutions.
If either the (conventional) monodromy group 
or the differential Galois group \cite{vdPut00} 
of $E$ are finite, those two groups coincide with the classical Galois group of 
the finite field extension $\CC(z,f_1,f_2,f_3)\supset\CC$.
In that case, the {\em projective monodromy group} refers to
the the Galois group of the finite extension $\CC(z,f_2/f_1,f_3/f_1)\supset\CC(z)$.
Both extensions of $\CC(z)$ are Galois extensions, because the monodromy representation
gives linear transformations of $f_1,f_2,f_3$ (in GL$(3,\CC)$)
or fractional-linear transformations of $f_2/f_1,f_3/f_1$ (in PGL$(3,\CC)$).

Standard transformations that preserve the projective monodromy are:
\begin{itemize}
\item[\refpart{i}] Gauge transformations of the contiguity equivalence;
\item[\refpart{ii}] Projective equivalence transformations;
\item[\refpart{iii}] Being companion hypergeometric functions 
(i.e., being solutions of the same third order Fuchsian equation); 
\item[\refpart{iv}] Multiplying the hypergeometric parameters $\ia_1,\ia_2,\ia_3,\ib_1,\ib_2\in\QQ$ 
by an integer coprime to their denominators. 
\end{itemize} 
These transformations characterize the classification of algebraic hypergeometric 
functions in \cite[Theorem 7.1]{BeuHeck}.
 
The classification in (\ref{eq:class347}) of $\hpgo32$-functions with the projective monodromy $\KG$
is given up to the transformations \refpart{i}--\refpart{iii}; see \cite[Proposition 2.1]{Vid18b}.
Transformation \refpart{iv} relates the types (3A), (3B),
or the types {\rm (4A)}, {\rm (4B)}, or the types {\rm (7A)}, {\rm (7B)}.

\subsection{Darboux coverings}
\label{sec:darbcovs}

The notions of {\em Darboux curves}, {\em Darboux coverings} 
and {\em Darboux evaluations} are introduced in \cite[Ch.~4]{ViPhD} and \cite{ViDarb}.
The terminology is motivated by integration theory of vector fields \cite{DarbVF},
where {\em Darboux polynomials} determine invariant hypersurfaces.
In differential Galois theory \cite{Weil95}, Darboux polynomials are specified
by algebraic solutions of an associated Riccati equation.
Here is a formulation of \cite[Definition 3.1]{ViDarb}.
\begin{definition} 
Consider a linear homogeneous differential equation
\begin{equation} \label{eq:genlde}
\frac{d^n}{dz^n}+a_{n-1}(z)\frac{d^{n-1}}{dz^{n-1}}+\ldots
+a_1(z)\frac{d}{dz}+a_0(z)=0
\end{equation}
on $\CC\PP^1$, thus with $a_i(z)\in\CC(z)$. 
We say that an algebraic covering $\varphi:B\to\CC\PP^1$ is a {\em Darboux covering} for $(\ref{eq:genlde})$
if a pull-back transformation $(\ref{algtransf})$ of it with respect to $\varphi$ has a solution $Y$ such that:
\begin{enumerate}
\item[\refpart{a}] the logarithmic derivative $u=Y'/Y$ is 
a rational function on the algebraic curve $B$;
\item[\refpart{b}] the algebraic degree of $u$ over $\CC(z)$ equals the degree of $\varphi$.
\end{enumerate}
The algebraic curve $B$ is then called a {\em Darboux curve}.
\end{definition}
Condition \refpart{a} means that the monodromy representation of the pulled-back equation
has a one-dimensional invariant subspace (generated by $Y$). 
Determination of Darboux coverings is made easier by their basic properties.  
The following lemma underlines that Darboux coverings are ``invariant" 
under transformations of hypergeometric equations that preserve the monodromy.
\begin{lemma}
Let $E_1$ denote a hypergeometric equation $(\ref{eq:hpgde})$ with a finite primitive monodromy group.
Suppose that other hypergeometric equation $E_2$ is related to $E_1$ by 
transformations described in \refpart{i}, \refpart{iv} in \S $\ref{th:3f2cases}$.
If $\varphi:B\to\CC\PP^1$ is a Darboux covering for $E_1$,
then $\varphi$ is a Darboux covering for $E_2$ as well.
\end{lemma}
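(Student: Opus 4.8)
The plan is to express both defining conditions of a Darboux covering purely in terms of the projective monodromy representation, and then to track how transformations \refpart{i} and \refpart{iv} move that representation. Since these transformations are invertible and can be composed, it suffices to treat a single application of each. By the remark following the definition, condition \refpart{a} for $\varphi$ and a pulled-back equation is equivalent to the (linear) monodromy of that equation having a one-dimensional invariant subspace; as the logarithmic derivative $Y'/Y$ depends only on the line $\CC\cdot Y$ and the radical twist $\theta$ in (\ref{algtransf}) contributes only the rational summand $\theta'/\theta$, condition \refpart{a} really sees only the projective monodromy homomorphism $\bar\rho:\pi_1(\CC\PP^1\setminus\{0,1,\infty\})\to\mathrm{PGL}(3,\CC)$ precomposed with $\varphi_*$, through its fixed points in $\CC\PP^2$. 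Condition \refpart{b} then says that the orbit of such a fixed point $[L]$ under the full projective monodromy group $G$ has exactly $\deg\varphi$ elements (equivalently, $u=Y'/Y$ generates $\CC(B)$ over $\CC(z)$). So I would first record that ``$\varphi$ is Darboux'' is a property of the pair $(\bar\rho,\varphi)$ alone.

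For \refpart{i} the transfer would be immediate: contiguous equations carry the same monodromy, the gauge operator $L\in\CC(z)[d/dz]$ effecting the contiguity being an isomorphism of monodromy representations defined over $\CC(z)$. Hence $\bar\rho$ is literally unchanged, the action on $\CC\PP^2$ is the same, and the fixed point $[L]$, its stabiliser, and its $G$-orbit are identical; conditions \refpart{a} and \refpart{b} for $E_2$ hold verbatim. Concretely, $L$ carries the Darboux solution $Y$ of the pulled-back $E_1$ to a solution $L(Y)$ of the pulled-back $E_2$ spanning the same invariant line and having the same Galois orbit over $\CC(z)$, so the degree in \refpart{b} is preserved.

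For \refpart{iv} I would use that multiplying the parameters by an integer $k$ coprime to the denominators raises every local monodromy eigenvalue to its $k$-th power, i.e.\ applies the cyclotomic automorphism $\sigma_k:\zeta\mapsto\zeta^k$ to the local data; by rigidity of hypergeometric equations the global monodromy is determined by that local data, so $\bar\rho$ for $E_2$ is $\sigma_k\!\circ\!\bar\rho$ for $E_1$ up to conjugacy. Because $\varphi_*$ is purely topological, the pulled-back $E_2$-monodromy is obtained by applying $\sigma_k$ entrywise to the pulled-back $E_1$-monodromy. A field automorphism sends a fixed point $[L]$ to a fixed point $\sigma_k[L]$ of the conjugated group and preserves orbit sizes and algebraic degrees over $\CC(z)$; thus condition \refpart{a} transfers (the pulled-back $E_2$ fixes $\sigma_k[L]$) and condition \refpart{b} transfers (the $G$-orbit of $\sigma_k[L]$ again has $\deg\varphi$ points). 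Here the primitivity hypothesis serves to guarantee that the monodromy is genuinely the full finite primitive linear group, so that the fixed-point and orbit computations are the intrinsic ones and no spurious invariant subspaces intervene.

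The hard part will be the \refpart{iv} step, specifically the claim that the \emph{same} map $\varphi$, rather than merely a covering of the same combinatorial type, stays Darboux after Galois conjugation. I would settle this in two checks. First, that the ramification of $\varphi$ over $z=0,1,\infty$ is unchanged: a local monodromy generator $g$ is replaced by $g^k$, and since $k$ is coprime to $\mathrm{ord}(g)$ (the denominators of the exponents), $g^k$ has the same order and, raising a permutation to a power coprime to all its cycle lengths, the same cycle type on every $G$-set; hence the branch data and the curve $B$ are intrinsic to $\varphi$ and survive. Second, that $\sigma_k(u)$ is genuinely rational on this same $B$ with degree $\deg\varphi$ over $\CC(z)$, which needs $B$ and $\varphi$ to be defined over a $\sigma_k$-stable number field, as holds for these arithmetically defined coverings. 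Combining these with the orbit-size identity is the crux; the contiguity case \refpart{i} is then a formality.
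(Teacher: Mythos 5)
Your proof is correct, and its skeleton coincides with the paper's: pull back both $E_1$ and $E_2$ by the same $\varphi$ and compare the monodromies of the two resulting equations. The paper's own proof is only two sentences long --- it asserts that transformations \refpart{i} and \refpart{iv} leave the monodromy group unchanged up to isomorphism, so the pulled-back equations have isomorphic monodromies and one has a radical solution iff the other does. What you add, and what the paper leaves implicit, is the verification that the isomorphism is compatible with $\varphi_*$, i.e.\ that it matches up the subgroups corresponding to the \emph{same} covering $\varphi$: for \refpart{i} the representations are literally equal (the contiguity gauge operator is a $\CC(z)$-isomorphism of differential modules, hence Galois-equivariant, so stabilisers of invariant lines and the degree in condition \refpart{b} are preserved); for \refpart{iv} the representation is replaced by its entrywise cyclotomic conjugate $\sigma_k\circ\rho$, identified via rigidity, and a field automorphism preserves invariant lines and orbit sizes. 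This is a genuine strengthening of the paper's argument rather than a different route. One remark: the worries in your final paragraph are unnecessary. Since $\varphi$ is fixed once and for all, the pulled-back $E_2$-monodromy is $\sigma_k$ applied to the image subgroup $\rho_1(\varphi_*\pi_1)$, so the existence of an invariant line transfers directly; and the logarithmic derivative of any solution spanning an invariant line of a Fuchsian equation with coefficients in $\CC(B)$ lies in $\CC(B)$ automatically, so no hypothesis on the field of definition of $B$ or on branch data is needed.
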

\begin{proof}
The transformations \refpart{i}, \refpart{iv} do not affect the primitive monodromy group,
thus equations $E_1,E_2$ have isomorphic monodromies.
Let $E_1^*$, $E_2^*$ denote the Fuchsian equations obtained from $E_1,E_2$, respectively,
by applying the same pull-back transformation with respect to $\varphi$.
The monodromies of $E_1^*$, $E_2^*$ are isomorphic. 
Therefore, if one has a radical solution so does the other.
\end{proof}

\begin{corollary} \label{th:darb3s}
The same degree $21$ Darboux coverings $\varphi$ or $1/\varphi$ 
for reduction of the projective monodromy $\KG=\rm {PSL}(2,\FF_7)$ 
of hypergeometric equation $(\ref{eq:hpgde})$
to the dihedral group $D_4$ apply to all hypergeometric functions of the types {\rm (3A)} and {\rm (3B)};
or to all  $\hpgo32$-functions of the types {\rm (4A)} and {\rm (4B)}; 
or to all  $\hpgo32$-functions of the types {\rm (7A)} and {\rm (7B)}.
\end{corollary}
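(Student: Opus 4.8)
The plan is to read the corollary as a direct application of the preceding Lemma, supplemented by the observation that the two remaining monodromy-preserving operations of \S\ref{th:3f2cases} leave a Darboux covering literally unchanged. So I would fix, say, a representative equation of type {\rm (3A)} written in the form (\ref{eq:hpgde}), and let $\varphi:B\to\CC\PP^1$ be the degree $21$ Darboux covering of its $z$-sphere whose construction is the subject of the paper. Since $\KG=\PSL(2,\FF_7)$ is finite and primitive, the hypotheses of the Lemma are met by every equation obtained from this one by the standard transformations. The first two transformations are disposed of by inspection: companion hypergeometric functions \refpart{iii} are by definition solutions of the \emph{same} third order Fuchsian equation, so they share $\varphi$ trivially, and a projective equivalence \refpart{ii} is a pull-back along the trivial covering $\varphi(\ldots)=z$, whose composition with $\varphi$ returns $\varphi$.

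Next I would invoke the Lemma for the contiguity gauge transformations \refpart{i}. Contiguous equations have isomorphic monodromy, so the equation pulled back along $\varphi$ retains its one-dimensional invariant subspace and hence its radical solution; this is exactly the content of the Lemma. Combined with \refpart{ii} and \refpart{iii}, this already shows that the single covering $\varphi$ serves \emph{all} $\hpgo32$-functions of the fixed type {\rm (3A)}, and symmetrically for each of {\rm (3B)}, {\rm (4A)}, {\rm (4B)}, {\rm (7A)}, {\rm (7B)} once a covering for that type is fixed.

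It then remains to cross between the two members of each pair. By the closing remark of \S\ref{th:3f2cases} the pairs {\rm (3A)}/{\rm (3B)}, {\rm (4A)}/{\rm (4B)}, {\rm (7A)}/{\rm (7B)} are related by transformation \refpart{iv}, i.e.\ by multiplying $\ia_1,\ia_2,\ia_3,\ib_1,\ib_2$ by an integer coprime to their denominators and reducing modulo $\ZZ$; one checks, for instance, that the multiplier $k=5$ carries the parameter set of {\rm (3A)} onto that of {\rm (3B)} after reordering, and likewise for the other pairs. Because \refpart{iv} preserves the primitive monodromy group, the Lemma again produces a radical solution over the \emph{same} curve $B$, which is the assertion of the corollary. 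The alternative $1/\varphi$ enters because the switch within a pair can equally be realized by the multiplier $k\equiv-1$, which inverts every local monodromy; matching the resulting equation to the displayed representative of (\ref{eq:class347}) then involves the M\"obius involution $z\mapsto 1/z$ of the base, under which a covering $\varphi$ is replaced by its reciprocal $1/\varphi$.

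The hard part will be precisely this last point: deciding, for each of the three pairs, whether $\varphi$ or $1/\varphi$ is the covering attached to the partner type. I would settle it by tracking how the chosen multiplier $k$ permutes the conjugacy classes of the three local monodromies at $z=0,1,\infty$ (equivalently, how it reorders the reduced local exponents), and comparing this with the branching data of $\varphi$ over the fibre $\{0,1,\infty\}$: when the classes over $0$ and $\infty$ are interchanged, the inversion is forced and $1/\varphi$ is the correct covering. Since $z\mapsto 1/z$ changes neither the degree $21$ nor the genus of $B$, no further compatibility check is needed once the orientation of the covering has been pinned down.
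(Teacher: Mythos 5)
Your argument is correct and is essentially the paper's own proof: the paper likewise settles the corollary by observing that each of the six types is a single contiguity equivalence class (transformation \refpart{i}) and that the pairs (3A)/(3B), (4A)/(4B), (7A)/(7B) are related by transformation \refpart{iv}, both of which preserve Darboux coverings by the preceding Lemma. Your closing discussion of how to decide between $\varphi$ and $1/\varphi$ goes beyond what the statement asserts (it only claims ``$\varphi$ or $1/\varphi$''), so the somewhat heuristic mechanism you sketch there does not affect the validity of the proof.
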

\begin{proof}
Each of the six types describes an equivalence class under the contiguity equivalence.
The pairs of types {\rm (3A)}, {\rm (3B)}; or {\rm (4A)}, {\rm (4B)}; or {\rm (7A)}, {\rm (7B)}
are related by transformation \refpart{iv} in \S \ref{th:3f2cases}.
\end{proof}

To match hypergeometric functions with radical or dihedral solutions 
of a pulled-back Fuchsian equation, the next 
lemma is useful. It applies to 
equations obtained by the considered  degree 21 pull-back transformations. 
\begin{lemma} \label{th:locexps}
Suppose that differential equation $(\ref{eq:genlde})$ has a 
finite dihedral monodromy group.
If the local exponents $\lambda_1,\ldots,\lambda_n$ 
at a point $P\in\CC\PP^1$  are all different modulo $\ZZ$,
then for each local exponent $\lambda_i$  ($i\in\{1,\ldots,n\}$) 
there is exactly one (up to scalar multiplication) radical solution 
with the vanishing order $\lambda$ 
at $P$. 
\end{lemma}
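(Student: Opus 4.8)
The plan is to localise the question at $P$ and then feed in the representation theory of the finite dihedral monodromy group $G$. First I would exploit finiteness twice over: every solution is algebraic, and the local monodromy $\gamma_P$ (the image of a small loop around $P$) has finite order, hence is semisimple. The hypothesis that $\lambda_1,\dots,\lambda_n$ are pairwise distinct modulo $\ZZ$ is exactly the statement that the eigenvalues $e^{2\pi i\lambda_1},\dots,e^{2\pi i\lambda_n}$ of $\gamma_P$ are pairwise distinct. Hence every eigenspace of $\gamma_P$ is one-dimensional, there is no resonance and so no logarithmic solution at $P$, and the Frobenius construction yields, for each $i$, a unique (up to scalar) local solution $y_i$ with leading term $(z-P)^{\lambda_i}$.

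Next I would convert ``radical solution'' into group theory. As observed right after the Definition, a solution $Y$ is radical exactly when $Y'/Y$ is rational on $B$, i.e.\ when the line $\CC\,Y$ is invariant under all of $G$; for a finite group this means $\CC\,Y$ is a one-dimensional subrepresentation of the monodromy representation. The uniqueness half of the lemma then falls out immediately: a radical solution restricts near $P$ to an eigenvector of $\gamma_P$, so it is proportional to exactly one $y_i$ and carries a well-defined vanishing order among $\lambda_1,\dots,\lambda_n$; two radical solutions with the same order would both be proportional to that single $y_i$, and two global solutions proportional near $P$ are proportional everywhere by analytic continuation. Thus there is \emph{at most} one radical solution with each given vanishing order, and this step uses nothing dihedral beyond finiteness.

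For existence I would decompose the monodromy representation into irreducibles and use that a dihedral group has only one- and two-dimensional irreducible constituents. Each one-dimensional summand is an invariant line and hence a genuine radical solution on $B$, whose exponent at $P$ is one of the $\lambda_i$. Inside a two-dimensional summand the two $\gamma_P$-eigenlines are eigenlines for the index-two cyclic subgroup $\langle r\rangle$ but are interchanged by every reflection, so they are not invariant for all of $G$; the corresponding solutions become radical only after the quadratic base change that trivialises the reflection, and these are precisely the \emph{dihedral} solutions. A short eigenvalue count confirms the mechanism: a reflection acts on a two-dimensional constituent with eigenvalues $\{1,-1\}$, which in the presence of any further constituent would repeat an eigenvalue and violate distinctness, so $\gamma_P$ must lie in $\langle r\rangle$ and its eigenlines are indeed the $\langle r\rangle$-eigenlines just described. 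Matching each $\lambda_i$ to the unique summand and eigenline that realises it then produces exactly one solution of radical type per exponent.

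I expect the existence/matching step, rather than uniqueness, to be the main obstacle. One has to keep precise track of which exponent each irreducible summand carries, and above all to recognise that the solutions coming from a two-dimensional summand are radical only on the double cover -- they are the dihedral evaluations, not radical functions on $B$ itself. The hypothesis that the $\lambda_i$ are distinct modulo $\ZZ$ is what makes the exponent-to-solution correspondence a bijection, ruling out both a repeated exponent (which would destroy uniqueness) and a coincidence that could merge two constituents; it is therefore the load-bearing assumption in both directions of the argument.
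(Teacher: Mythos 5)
Your argument is correct and follows essentially the same route as the paper: decompose the monodromy representation of the finite dihedral group into one- and two-dimensional irreducible summands, observe that these yield the radical and dihedral basis solutions respectively, and use the hypothesis that the exponents are distinct modulo $\ZZ$ to match each $\lambda_i$ bijectively to exactly one basis solution. The only real difference is in the bookkeeping of that last step: where you identify each $\lambda_i$ with the one-dimensional eigenline of the local monodromy sitting inside a unique summand, the paper argues by contradiction that two basis solutions sharing a vanishing order $\lambda^*$ would yield a linear combination of order $\lambda^*+k$ with integer $k>0$, impossible since the exponents are distinct modulo $\ZZ$ --- and, like you, it reads ``radical solution'' in the statement loosely so as to include the dihedral solutions $(p\pm q\sqrt{g})^e$ coming from the two-dimensional summands.
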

\begin{proof} (Compare with \cite[Lemma 2.5]{Vid18b}.)
A monodromy representation of the finite dihedral group reduced to a direct sum 
of 1-dimensional and 2-dimensional invariant subspaces \cite{GroupRep}.
Each 1-dimensional subspace gives a radical solution, while each 2-dimensional space
gives two independent solutions of the form $(p\pm q\sqrt{g})^e$. 
This gives a basis of the whole solution space.
If two of those solutions had the same local exponent $\lambda^*$ at $P$,
their linear combination would have the vanishing order $\lambda^*+k$ with integer $k>0$.
Hence there is a basis solution for each of the $n$ local exponents.
\end{proof}

\section{Three Darboux coverings}
\label{sec:ourcovs}

Here we compute the Darboux coverings of degree 21 that reduce 
the projective monodromy group $\KG=\PSL(2,\FF_7)$
of third order hypergeometric equations (\ref{eq:hpgde}) 
to the dihedral group $D_4$ of 8 elements. 
They turn out to be Belyi maps \cite{LandoZvonkin}
with the following branching patterns (and genus $g$):
\begin{align}
\mbox{for the types {\rm (3A)} and \rm (3B):} & \qquad [7^3/3^7/2^{8}1^5] & (g=0); \\
\label{eq:brpat4} 
\mbox{for the types {\rm (4A)} and \rm (4B):} & \qquad  [7^3/4^42^21/2^{8}1^5] & (g=0); \\
\label{eq:brpat7} 
\mbox{for the types {\rm (7A)} and \rm (7B):} & \qquad  [7^3/7^3/2^{8}1^5] & (g=2).
\end{align}
As we will see, there is exactly one Darboux covering  $\Psi_3:\PP^1\to\PP^1$ 
(up to M\"obius transformations on either $\PP^1$) with the branching pattern $[7^3/3^7/2^{8}1^5]$. 
Just as the degree 24 Darboux coverings in \cite[\S 2.5]{Vid18b},
the degree 21 Darboux coverings 
are related to each other 
by quadratic and cubic transformations (\ref{eq:t32a})--(\ref{eq:t32c}) of hypergeometric solutions. 
Consequently, there are unique Darboux coverings  $\Psi_4:\PP^1\to\PP^1$ 
and $\Psi_7:H_7\to\PP^1$ (up to holomorphic symmetries of the Darboux curves and $\PP^1$) 
with the branching patterns in (\ref{eq:brpat4})--(\ref{eq:brpat7}), respectively.

Let $\KC$ denote Klein's curve (\ref{eq:klein}).
As is well known \cite[\S 5.1.1]{Kato11},
hypergeometric equations of type (3A) are directly related to this curve.
In particular, the identity \cite[Proposition 30]{FrancMason} 
\begin{align} \label{eq:kleinhpg} 
\hpg32{\frac5{42},\,\frac{19}{42},\,\frac{11}{14}}{\frac57,\;\frac87}{x}^{\!3}
\hpg32{\!-\frac1{42},\,\frac{13}{42},\,\frac{9}{14}}{\frac47,\;\frac67}{x} = \hspace{-103pt} \\ 
& \hpg32{\!-\frac1{42},\,\frac{13}{42},\,\frac{9}{14}}{\frac47,\;\frac67}{x}^{\!3}
\hpg32{\frac{17}{42},\,\frac{31}{42},\,\frac{15}{14}}{\frac97,\;\frac{10}7}{x}  \nonumber \\
& +\frac{x}{1728}\,\hpg32{\frac{17}{42},\,\frac{31}{42},\,\frac{15}{14}}{\frac97,\;\frac{10}7}{x}^{\!3}
\hpg32{\frac5{42},\,\frac{19}{42},\,\frac{11}{14}}{\frac57,\;\frac87}{x} \qquad \nonumber
\end{align}
gives a projective parametrization of $\KC$ by hypergeometric functions,
and the degree 168 Galois covering $\KC\to\PP^1$.
The Darboux coverings of types (3A), (3B) can be found by investigating degree 21 
subcoverings of the Galois covering. More concretely \cite[\S 4]{Elkies}, 
the modular curve $\XX(7)$ is isomorphic to $\KC$, 
and we may look at the Galois covering $\XX(7)\to\XX(1)$.
Its subcoverings are listed in the Cummins-Pauli tables \cite{CumminsP} 
as corresponding level 7 congruence subgroups. 
There we find the congruence subgroup 7D$^0$ 
of index 21, with $c_2=5$, $c_3=0$, and the cusps $7^3$.
This gives the branching pattern $[7^3/3^7/2^{8}1^5]$.   
Besides, the covering is unique (up to M\"obius transformations) and defined over $\QQ$,
as the table entry in \cite{CumminsP} indicates the conjugation orbit of size con$\;=1$.
There are no other entries of level 7 and index 21
(also no entries with the branching $[7^3/3^7/2^{10}1]$ of genus $g=1$).
Besides, 7D$^0$ is a subgroup of 7A$^0$. The larger congruence subgroup 7A$^0$
is of index 7, and gives the branching pattern $[2^21^3/3^21/7]$ (with con$\;=2$).
Hence, the degree 21 coverings are compositions of degree 7 Belyi maps
with this branching and cubic coverings.
The degree 7 maps are straightforward to compute \cite[Step 4 in \S 3]{ViAGH}.
They are defined over $\QQ(\sqrt{-7})$, say:
\begin{align}
\psi_7(z) & =\frac{z}{1728} \left( z+\sqrt{-7} \right)^3 \left(z+\frac{7+5\sqrt{-7}}{2}\right)^{\!3}.
\end{align}
Then $\psi_7(z)-1$ equals 
\begin{align}
\frac{z^2+(2\!+\!4\sqrt{-7})z-27}{1728} \! \left(z+\frac{9\!+\!5\sqrt{-7}}2\right) \!
\left(z^2+(2\!+\!2\sqrt{-7})z+\frac{-5\!+\!\sqrt{-7}}2\right)^{\!2}. \nonumber
\end{align}
To produce the branching $[7^3/3^7/2^{8}1^5]$, 
the cubic covering must branch above $z=0$ (with order 3)
and above 2 of the 3 simple roots of $\psi_7(z)-1$. 
The discriminant of $z^2+(2+4\sqrt{-7})z-27$ equals $16\sqrt{-7}$,
hence this polynomial does not factor over $\QQ(\sqrt{-7})$.
We must have simple branching points above its roots
to have a chance of obtaining a composition defined over $\QQ$ as expected.
A correct composition is
\begin{equation}
\Psi_3(x)=\psi_7\!\left(\frac{(4x-7-\sqrt{-7})^3}{(20-4\sqrt{-7})(x^3-7x+7)} \right).
\end{equation}
We obtain
\begin{equation} \label{eq:darbcov3}
\Psi_3=\frac{x^3(3x^2-7)^3(2x^2-7x+7)^3(11x^2-35x+28)^3}{1728(x^3-7x+7)^7}.
\end{equation}
A dessin d'enfant of this covering is depicted in Figure \ref{fig:dessins} \refpart{a}.
The covering $\Psi_3$ was computed in \cite[\S 4.3]{ViPhD} 
by lengthy computations with {\sf Maple} starting from the branching pattern alone. 
It is computed in \cite[(4.35)]{Elkies} as well,
with $x=2\phi/(\phi+1)$. 
(The factor $5\phi^2-15\phi-7$ there must be corrected to $5\phi^2-14\phi-7$.)

\begin{figure}
\begin{picture}(360,124)
\put(-10,2){\includegraphics[width=360pt]{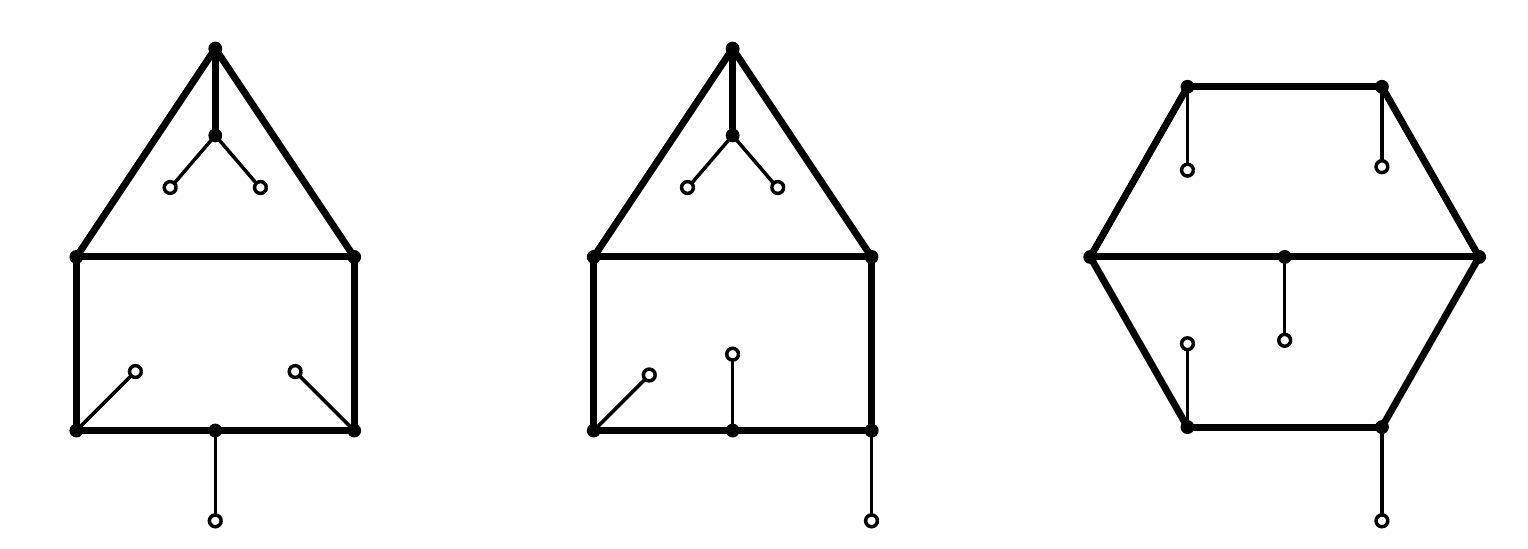}}
\put(0,0){\refpart{a}}  \put(120,0){\refpart{b}}  \put(240,0){\refpart{c}}
\end{picture}
\caption{The dessins d'enfant with the branching pattern $[7^3/3^7/2^{8}1^5]$.}
\label{fig:dessins}
\end{figure}

\begin{remark} \rm
Evidently, there is at least one other Galois orbit of Belyi maps 
with the same branching $[7^3/3^7/2^{8}1^5]$. 
They are alternative compositions of $\Psi_7$ with a cubic covering.
Those Belyi maps are defined over $\QQ\big(\sqrt{\sqrt{-7}}\,\big)$, 
that is, the splitting field of the polynomial $z^2+(2+4\sqrt{-7})z-27$.
The dessins d'enfant of these composite Belyi maps
are represented (up to mirroring) in Figure \ref{fig:dessins} \refpart{b}, \refpart{c}.
As in \cite{Hoeij}, thick edges represent two dessin edges with a white vertex of order 2 in the middle.
The Belyi maps are not quotients of the Galois covering $\KC\to\PP^1$
because their monodromy (i.e., the {\em cartographic group} \cite{LandoZvonkin}) 
is not homomorphic to $\KG$ but is a larger group of order $2^63^87$. 
Modern computations (following \cite{Hoeij}) show that there are no other Belyi maps
with the branching pattern $[7^3/3^7/2^{8}1^5]$.
\end{remark}
 
By Corollary \ref{th:darb3s}, the Belyi map $\Psi_3$ 
is a Darboux covering for type (3B) hypergeometric functions as well.
To compute degree 21 Darboux coverings for the other types, 
we follow their algebraic correspondence relations to $\Psi_3$ that are consequences 
of quadratic and cubic transformations 
of $\hpgo32$-functions \cite{KatoTR}.
In particular, representative $\hpgo32$-functions of types {\rm (3A)} and {\rm (7A)} are related 
by cubic transformation (\ref{eq:t32c}). For example,
\begin{equation} \label{eq:hpg37c}
\hpg32{\!-\frac1{42},\,\frac{13}{42},\,\frac{9}{14}}{\frac47,\;\frac67}{\frac{27z^2}{(4-z)^3}} = 
\left( 1-\frac{z}4\,\right)^{-1/14} \hpg32{\!-\frac1{14},\,\frac1{14},\,\frac{5}{14}}{\frac17,\;\frac57}{z}\!.
\end{equation}
The argument of the $\hpgo32$-function on the left equals $1/\Psi_3$  
in its Darboux or dihedral evaluation 
(so that $\Psi_3=0$ and $\Psi_3=\infty$ match correctly singularities of the hypergeometric equation).
The Darboux covering $\Psi_7$ 
is therefore constructed by parametrizing the fiber product
\begin{equation} \label{eq:fp7721}
\frac1{\Psi_3(x)}=\frac{27z^2}{(4-z)^3}
\end{equation}
(compare with \cite[Lemma 3.5]{ViDarb}). 
Irreducibility of this equation means that the Darboux covering $\Psi_7:H_7\to\PP^1$ is unique
(up to holomorphic symmetries of both curves).
To parametrize the fiber product by a simple equation, 
we substitute $z=8\hat{z}/(2\hat{z}+3)$ so that the right-hand side becomes $\hat{z}^{\,2}(2\hat{z}+3)$. 
After the next substitution $\hat{z}=\tilde{z}P_3^2/(xQ_3)$
we get the equation 
\begin{equation}
27P_3^3=49\tilde{z}^2(2\tilde{z}P_3^2+3xQ_3)
\end{equation}
of degree 9 in $x,\tilde{z}$.
This equation defines a curve of genus 2 isomorphic to 
\begin{equation} \label{eq:darbc7} \textstyle
v^2=u \, \big(1-\frac{63}4u+91u^2-231u^2+224u^4 \big).
\end{equation}
This is our standard model of $H_7$. 
Eventually, we can find this parametrization of (\ref{eq:fp7721}) by $H_7$: 
\begin{equation} \label{eq:cubpara37}
x=\frac{2v+(6u-1)(56u^2-21u+2)}{(4u-1)(56u^2-16u+1)}, \qquad z=\Psi_7,
\end{equation}
where $\Psi_7$ is the Darboux curve:
\begin{equation} \label{eq:darbcov7}
\Psi_7=
\frac{P_7^{\,2}\;(2v-3u+14u^2)^7}{u\,(1-4u)^7\,(-2v-3u+14u^2)^7}
\end{equation}
with
\begin{equation} \label{eq:poly47} \textstyle
P_7=v+\frac72u-35u^2+112u^3-128u^4.
\end{equation}
As a function on $H_7$, the Belyi map 
in (\ref{eq:fp7721}) has the branching $[14^37^3/3^{21}/2^{29}1^5]$
(see \cite[\S A.7]{ViDarb} for details). Comparing with the cubic covering $[2^11/3/2^11]$,
the map $\Psi_7$ branches over the three points distinguished in $[2^11/\!\star\!/\!\star1]$.
It follows that $\Psi_7$ is a Belyi map with the branching pattern  $[7^3/7^3/2^{8}1^5]$.
This can be checked by computing the divisors of $\Psi_7$ and $\Psi_7-1$ on $H_7$;
see \S \ref{sec:dc277}.

Representative $\hpgo32$-functions of types {\rm (4B)} and {\rm (7B)} are related 
by quadratic transformation (\ref{eq:t32a}). For example,
\begin{equation}
\hpg32{\!-\frac1{28},\,\frac3{14},\,\frac{13}{28}}{\frac27,\;\frac67}{-\frac{4z}{(1-z)^2}} = 
(1-z)^{-1/14}\,\hpg32{\!-\frac1{14},\,\frac1{14},\,\frac{9}{14}}{\frac27,\;\frac67}{z}\!.
\end{equation}
Consequently, the Darboux covering $\Psi_4$ 
is constructed from the fiber product
\begin{equation} \label{eq:qua47}
\frac1{\Psi_4}=-\frac{4\,\Psi_7}{(\Psi_7-1)^2}.
\end{equation}
The right-hand side is a Belyi map with the branching $[7^6/4^82^5/2^{21}]$. 
The Darboux covering is obtained by factoring out the symmetry $\Psi_7\mapsto 1/\Psi_7$.
This symmetry turns out to be the hypergeometric involution $(u,v)\mapsto (u,-v)$ on $H_7$,
as the function in (\ref{eq:qua47}) is a rational function in $u$.
The substitution $u=1/x$ gives
\begin{equation} \label{eq:darbcov4}
\Psi_4=\frac{x\,(2x^2-21+56)^2\,(x^2-28)^4\,(x^2-7x+14)^4}
{4\,(x^3-14x^2+56x-56)^7}.
\end{equation}
This is a Belyi function with the branching pattern $[7^3/4^42^21/2^{8}1^5]$.

\section{Darboux evaluations}
\label{sec:c237}

The remainder of this article gives representative Darboux evaluations 
for algebraic $\hpgo32$-functions with the projective monodromy group $\KG\cong\PSL(2,\FF_7)$
using the degree 21 Darboux coverings that reduce the monodromy to
the dihedral group $D_4$ of 8 elements.

\subsection{The case (3A)}

The Darboux covering $\Psi_3$ is given in (\ref{eq:darbcov3}). 
The point $x=0$ is (up to projective equivalence) 
a regular point after a pull-back transformation.
Accordingly, these Darboux evaluations express hypergeometric functions
as linear combinations of radical and dihedral solutions.
Note that
\begin{equation}
\Psi_3-1=\frac{49(2x-3)(4x^4-21x^2+28)\,H_1^2\,H_2^2}
{1728(x^3-7x+7)^7},
\end{equation}
where $H_1,H_2$ are degree 4 polynomials.
Let us denote
\begin{align}
P_3 = & \; 1-x+\frac{x^3}{7},\\
Q_3 = &  \left(1-\frac{3x^2}{7}\right) \!
\left(1-x+\frac{2x^2}{7}\right) \! \left(1-\frac{5x}4+\frac{11x^2}{28}\right),
\end{align}
so that 
\begin{equation}
\Psi_3=\frac{49\,x^3\,Q_3^3}{27\,P_3^7}.
\end{equation}
We use these polynomials (and normalize radical or dihedral solutions)
for local consideration at $x=0$.
\begin{theorem} \label{th:case3a}
Let us define the functions
\begin{align}
Y_0= &\, \sqrt{1-\frac{2x}3},\\
Y_1= & \left(\frac{21-8x^2+8\,\sqrt{\,7-\frac{21}4x^2+x^4}\,}{21+8\sqrt7}\right)^{\!1/4},\\
Y_2= & \left(\frac{21-8x^2-8\,\sqrt{\,7-\frac{21}4x^2+x^4}\,}{21-8\sqrt7}\right)^{\!1/4}.
\end{align}
The following identities hold in a neighborhood of $x=0$:
\begin{align} \label{eq:darbval3a}
P_3^{1/6} \; \hpg32{\!-\frac1{42},\,\frac{5}{42},\,\frac{17}{42}}{\frac13,\;\frac23}{\Psi_3} 
= &\;  \frac12\,Y_0+\frac{3+\sqrt7}{12}\,Y_1+\frac{3-\sqrt7}{12}\,Y_2,\\
 \label{eq:darbval3b}  x\,Q_3P_3^{-13/6}\,
\hpg32{\frac{13}{42},\,\frac{19}{42},\,\frac{31}{42}}{\frac23,\;\frac43}{\Psi_3}  
= & -\!3\,Y_0+\frac{3+\sqrt7}{2}\,Y_1+\frac{3-\sqrt7}{2}\,Y_2,\\
  \label{eq:darbval3c} x^2Q_3^2P_3^{-9/2} \; 
\hpg32{\frac9{14},\,\frac{11}{14},\,\frac{15}{14}}{\frac43,\;\frac53}{\Psi_3}
 = & -\!2\sqrt7\;Y_1+2\sqrt7\;Y_2.
\end{align}
\end{theorem}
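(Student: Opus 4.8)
The idea is to view both sides of each identity as solutions of the third order Fuchsian equation $E_1$ obtained by pulling the type (3A) equation $E_0$ back along $z=\Psi_3$, and then to match them through their Taylor series at $x=0$. Here $E_0$ is the equation (\ref{eq:hpgde}) with $\ib_1=\frac13,\ib_2=\frac23$ whose three companion solutions at $z=0$, of local exponents $0,\frac13,\frac23$, are exactly the hypergeometric functions on the left of (\ref{eq:darbval3a})--(\ref{eq:darbval3c}). Since $\Psi_3=\frac{49}{27}x^3+O(x^4)$ by (\ref{eq:darbcov3}), the covering ramifies to order $3$ over $z=0$, so those exponents pull back to $0,1,2$; carrying along the common gauge factor $\theta=P_3^{1/6}$ renders $x=0$ an apparent singularity of $E_1$ (a regular point up to projective equivalence, as announced above), at which every solution is holomorphic and is determined by its coefficients of $x^0,x^1,x^2$.

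First I would produce $Y_0,Y_1,Y_2$ as a basis of solutions of $E_1$ dictated by its monodromy. By the construction of $\Psi_3$ in Section \ref{sec:ourcovs}, $E_1$ has projective monodromy $D_4$, so (as in the proof of Lemma \ref{th:locexps}) its solution space is a one-dimensional radical part together with a two-dimensional dihedral part spanned by functions $(p\pm q\sqrt{g})^{1/4}$. The relevant branch data sit in the fibre $\Psi_3=1$: in the factorization of $\Psi_3-1$ recorded above, the simple factor $2x-3$ gives the radical solution $Y_0=\sqrt{1-\frac{2x}3}$, while the quartic $4x^4-21x^2+28=4\big(7-\frac{21}4x^2+x^4\big)$ supplies the discriminant $g=7-\frac{21}4x^2+x^4$ of the dihedral pair. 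The algebraic identity $(21-8x^2)^2-64\,g=-7$ shows that $p^2-q^2g$ is constant, hence $Y_1Y_2=1$ in the chosen normalization; this constancy is precisely the condition isolating the two-dimensional invariant subspace. Verifying that $(p\pm q\sqrt{g})^{1/4}$ carries the local exponents of $E_1$ at the three singular fibres $\Psi_3=0,1,\infty$ then confirms that $Y_0,Y_1,Y_2$ solve $E_1$; their Taylor matrix through order $x^2$ is nonsingular, so they are independent.

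Next I would recognize the left-hand sides as these pull-backs. Up to the explicit constants $1,\frac{3}{7^{2/3}},\frac{9}{7^{4/3}}$ they are the images under $z=\Psi_3$, with gauge $\theta=P_3^{1/6}$, of the companion solutions $1,\ z^{1/3}(\cdots),\ z^{2/3}(\cdots)$ at $z=0$: using $\Psi_3^{1/3}=\frac{7^{2/3}}3\,xQ_3P_3^{-7/3}$, the powers $z^{1/3},z^{2/3}$ become exactly the prefactors $xQ_3P_3^{-13/6}$ and $x^2Q_3^2P_3^{-9/2}$ of (\ref{eq:darbval3b}) and (\ref{eq:darbval3c}). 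Thus each left-hand side solves $E_1$ and is holomorphic at $x=0$ with leading term $x^0,x^1,x^2$ respectively.

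It then remains, for each identity, to compare the coefficients of $x^0,x^1,x^2$ on the two sides, both being holomorphic solutions of $E_1$ at the regular point $x=0$. Since $Y_0=1-\frac{x}3-\frac{x^2}{18}+O(x^3)$ and $Y_1,Y_2$ are even with $Y_1(0)=Y_2(0)=1$ and $Y_2-Y_1=\frac1{2\sqrt7}x^2+O(x^4)$, a direct check shows that the stated coefficients $\frac12,\frac{3\pm\sqrt7}{12}$ of (\ref{eq:darbval3a}) and their analogues in (\ref{eq:darbval3b})--(\ref{eq:darbval3c}) make the two expansions agree through order $x^2$; uniqueness at the regular point then gives the identities. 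I expect the essential difficulty to be the structural step of the second paragraph---pinning down the discriminant $g$ and proving that the specific dihedral pair $Y_1,Y_2$ (not merely some $(p\pm q\sqrt{g})^{1/4}$) solves $E_1$ with the correct exponents at every singular fibre. Once that is secured, the matching in the last paragraph is routine series bookkeeping.
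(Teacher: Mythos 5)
Your overall strategy matches the paper's: both sides of each identity are solutions of the degree‑21 pull‑back $E_1$ of the type (3A) equation, one exhibits the basis $Y_0,Y_1,Y_2$ of radical/dihedral solutions, and the hypergeometric pull‑backs are expressed in that basis by a finite linear‑algebra step. Where you genuinely diverge is in how the basis is produced. The paper simply cites the PhD thesis for the fact that $\sqrt{2x-3}$ and $\left(x^3-7x+7\right)^{1/6}\hpgo32(\Psi_3)$ satisfy the same third‑order equation, and then obtains $Y_\pm=\big(21-8x^2\pm4\sqrt{28-21x^2+4x^4}\big)^{1/4}$ by running {\sf Maple}'s {\sf dsolve} on the explicit pulled‑back operator; your proposal instead derives the shape of the basis structurally from the $D_4$ monodromy and reads off the radical factor $2x-3$ and the discriminant $4x^4-21x^2+28$ from the factorization of $\Psi_3-1$, and then fixes the linear combinations by comparing Taylor coefficients through order $x^2$ at the apparent singularity $x=0$ (your exponent bookkeeping, the identity $Y_1Y_2=1$, and the expansions $Y_0=1-\frac{x}{3}-\frac{x^2}{18}+O(x^3)$, $Y_2-Y_1=\frac{1}{2\sqrt7}x^2+O(x^4)$ all check out against the stated coefficients). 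Your route is more self‑contained and explains \emph{why} the answer has this form; the paper's route is shorter because the hard analytic content is delegated to an explicit prior computation.

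The one step you should not leave as stated is the claim that verifying the local exponents of $(p\pm q\sqrt{g})^{1/4}$ at the fibres $\Psi_3\in\{0,1,\infty\}$ ``confirms that $Y_0,Y_1,Y_2$ solve $E_1$.'' Matching local exponents at the singular points is necessary but not sufficient for being a solution; you need a supplementary uniqueness argument — e.g.\ that the two‑dimensional dihedral part of the solution space of $E_1$ consists precisely of functions of the form $(p\pm q\sqrt{g})^{1/4}$ with the branch locus of $\sqrt{g}$ contained in the singular fibres, and that within this family the prescribed exponent divisor determines $p,q$ up to scalar (a dimension count in the style of Lemma \ref{th:locexps}, which itself only applies at points where the exponents are distinct mod $\ZZ$, hence not at $x=0$). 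Alternatively, and this is in effect what the paper does, one can compute the third‑order operator annihilating $Y_0,Y_1,Y_2$ directly and verify it coincides with $E_1$. You flag this as the essential difficulty yourself; it is fillable, but as written it is an assertion rather than a proof.
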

\begin{proof}
A pull-back of Hurwitz equation \cite{Hurwitz86} with respect to $\Psi_3$ 
was considered in the PhD thesis \cite[\S 4.3]{ViPhD}.
The conclusion is that the functions 
\begin{equation} \label{eq:samede}
\sqrt{2x-3} \quad \mbox{and} \quad
\left(x^3-7x+7\right)^{\!1/6}\hpg32{\!-\frac1{42},\,\frac{5}{42},\,\frac{17}{42}}{\frac13,\;\frac23}{\Psi_3}
\end{equation}
satisfy the same differential equation of order 3.
The pulled-back equation \cite[(4.27)]{ViPhD} can be fully solved by 
straightforwardly applying {\sf Maple}'s command {\sf dsolve}. 
Other two solutions of the same equation are
\begin{align} \label{eq:samede2}
Y_\pm=\left(21-8x^2\pm 4\sqrt{28-21x^2+4x^4}\right)^{1/4}.
\end{align}
The functions $Y_0,Y_1,Y_2$ are the solutions $\sqrt{2x-3}$, $Y_+$, $Y_-$ 
normalized to the value 1 at $x=0$.
Formulas (\ref{eq:darbval3a})--(\ref{eq:darbval3b}) express linearly a set of
three companion $\hpgo32$-functions in the solution basis $Y_0,Y_1,Y_2$.
\end{proof}
Conversely, the solutions $Y_0,Y_1,Y_2$ can be expressed linearly in terms of hypergeometric functions.
For example,
\begin{align}
Y_0=
P_3^{1/6}\,\hpg32{\!-\frac1{42},\,\frac{5}{42},\,\frac{17}{42}}{\frac13,\;\frac23}{\Psi_3}  
 -\frac{x}6\,Q_3P_3^{-13/6}\,
\hpg32{\frac{13}{42},\,\frac{19}{42},\,\frac{31}{42}}{\frac23,\;\frac43}{\Psi_3}
\end{align}
in a neighborhood of $x=0$. Note that $Y_2=Y_1^{-1}$. 

The same hypergeometric, radical and dihedral functions can be similarly compared 
around a root of $Q_3$. For example, here are the solutions 
$Y_0,Y_1,Y_2$ normalized to have the value 1 at $x=\sqrt{7/3}$:
\begin{align}
\widetilde{Y}_0= &\, \sqrt{\big(9+2\sqrt{21}\big)(2x-3)},\\
\widetilde{Y}_1= & \left(3\cdot\frac{21-8x^2+4\,\sqrt{4x^4-21x^2+28}}
{7+4\sqrt7}\right)^{\!1/4},\\
\widetilde{Y}_2= & \left(3\cdot\frac{21-8x^2-4\,\sqrt{4x^4-21x^2+28}\,}
{7-4\sqrt7}\right)^{\!1/4}.
\end{align}
We normalize $\widetilde{P}_3 = -3 \big(9+2\sqrt{21}\big)\,P_3$ 
so that $\widetilde{P}_3\big(\sqrt{7/3}\big)=1$ as well.
Then, for instance, 
\begin{align}
3\big(\sqrt7-\sqrt3\big)\widetilde{P}_3^{\,1/6}\,
\hpg32{\!-\frac1{42},\,\frac{5}{42},\,\frac{17}{42}}{\frac13,\;\frac23}{\Psi_3}
 \hspace{-56pt} & \\ 
& = \big(2\sqrt7-3\sqrt3\big)\, \widetilde{Y}_0 
+ \frac{\sqrt7+1}{2} \, \widetilde{Y}_1 
+ \frac{\sqrt7-1}{2} \, \widetilde{Y}_2  \nonumber 
\end{align} 
in a neighborhood of $x=\sqrt{7/3}$. 

Similar expressions in terms of re-normalized $Y_0$, $Y_1$, $Y_2$ can be obtained for
the companion hypergeometric solutions
\begin{align} \!
& \widehat{Q}_3^{\,1/14}\,
\hpg32{\!-\frac1{42},\,\frac{13}{42},\,\frac{9}{14}}{\frac47,\;\frac67}{\frac1{\Psi_3}}, 
\quad  P_3\,\widehat{Q}_3^{-5/14}\,
\hpg32{\frac5{42},\,\frac{19}{42},\,\frac{11}{14}}{\frac57,\;\frac87}{\frac1{\Psi_3}}, 
\nonumber \\
& P_3^3\,\widehat{Q}_3^{-17/14}\,\hpg32{\frac{17}{42},\,\frac{31}{42},\,\frac{15}{14}}{\frac97,\;\frac{10}7}{\frac1{\Psi_3}}\;
\end{align}
around a root $x_0$ of $P_3=0$ (therefore $\Psi_3=\infty$). 
Here $\widehat{Q}_3 =Cx\,Q_3$ with a constant $C$ chosen so that $\widehat{Q}_3(x_0)=1$.
One can take $x_0=-\xi^2-2\xi+1$, where $\xi=2\cos\frac{2\pi}7$.

\subsection{The case (3B)}

By differentiating (\ref{eq:darbval3a})--(\ref{eq:darbval3c}) and using contiguous relations,
a basis of hypergeometric solutions of any differential equation (\ref{eq:hpgde}) of type (3A)
can be expressed in terms of radical and dihedral solutions.
Up to projective equivalence, the dihedral solutions will be products of $Y_1$ or $Y_2$ 
with rational functions in $\QQ\big(x,\sqrt{4x^4-21x^2+28}\big)$.
The radical solutions can be obtained by considering Riemann's $P$-symbols
of pulled-backed equations as in \cite[Proofs of Theorems 3.1, 3.3]{Vid18b}.
That is, radical solutions are constructed by picking a local exponent at each singular point
and appending a polynomial part (to match a local exponent at $x=\infty$).

As with degree 24 Darboux evaluations of type (3B) hypergeometric functions in \cite[\S 3.2]{Vid18b},
type (3B) Darboux evaluations of degree 21 appear to always require those extraneous factors to 
$Y_1,Y_2$ and $Y_0$. In particular, let
\begin{align}
W_1 & =\frac{2x^2-3x+2\,\sqrt{7-\frac{21}4x^2+x^4}}{2\sqrt7}, \\
W_2 & =\frac{3x-2x^2+2\,\sqrt{7-\frac{21}4x^2+x^4}}{2\sqrt7}.
\end{align}
Note that the conjugation of $\sqrt7$ interchanges $W_1$ and $W_2$. 
A basis of solutions of a relevant pulled-back equation is
\begin{equation}
xY_0, \qquad W_1Y_1,\qquad W_2Y_2.
\end{equation}
We obtain the following identities in a neighborhood of $x=0$: 
\begin{align}
\sqrt{P_3} \; \hpg32{\!-\frac1{14},\,\frac{3}{14},\,\frac{5}{14}}{\frac13,\;\frac23}{\Psi_3} 
= &\;  \frac{3+\sqrt7}{6}\,W_1Y_1+\frac{3-\sqrt7}{6}\,W_2Y_2,\\
x\,Q_3P_3^{-11/6}\,
\hpg32{\frac{11}{42},\,\frac{23}{42},\,\frac{29}{42}}{\frac23,\;\frac43}{\Psi_3}  
= & \,\frac12\,xY_0-\frac{\sqrt7}{6}\,W_1Y_1+\frac{\sqrt7}{6}\,W_2Y_2,\\
x^2Q_3^2P_3^{-25/6} \; 
\hpg32{\frac{25}{42},\,\frac{37}{42},\,\frac{43}{42}}{\frac43,\;\frac53}{\Psi_3} 
= & \;6\,xY_0+2\sqrt7\;W_1Y_1-2\sqrt7\;W_2Y_2.
\end{align}

\subsection{The case (4A)}
\label{sec:case4a}

The Darboux covering $\Psi_4$ is given in (\ref{eq:darbcov4}). 
The point $x=0$ is a singular point after a pull-back transformation,
with no integer differences of local exponents. 
Lemma \ref{th:locexps} implies that a basis of companion hypergeometric solutions
is matched bijectively (up to a constant factor) with the radical and dihedral solutions.
For shorthand, let us introduce the polynomials
\begin{align} \label{eq:derbp4}
P_4=& \; \textstyle 1-x+\frac14\,x^2-\frac1{56}\,x^3,\\
Q_4=& \textstyle \left(1-\frac{1}{28}\,x^2\right) \! \left(1-\frac12\,x+\frac{1}{14}\,x^2\right),\\
Q_8=& \textstyle
\left(1+3x-\frac94x^2+\frac12x^3-\frac1{28}x^4\right) \!
\left(1-\frac58x+\frac3{16}x^2-\frac1{32}x^3+\frac1{448}x^4\right),  \\
R_1=& \;\textstyle 1-\frac{3}8\,x+\frac{1}{28}\,x^2, \\
 \label{eq:derbr2} R_2=& \;\textstyle
1-\frac{33}{32}\,x+\frac{13}{22}\,x^2-\frac{9}{128}\,x^3+\frac{1}{224}\,x^4.
\end{align}
Then
\begin{equation}
\Psi_4=-\frac{343\,x\,R_1^2\,Q_4^4}{32\,P_4^7}=1-\frac{R_2\,Q_8^2}{P_4^7}.
\end{equation}
Let us also define the functions
\begin{align}
K_1 = &\, \left( { \textstyle
\frac12\sqrt{R_1R_2}+\frac12-\frac{45}{128}x+\frac5{64}x^2-\frac5{896}x^3 } \right)^{1/4}, \\[2pt]
K_2 = &\, \left( \!
\frac{\sqrt{R_1R_2}-1+\frac{45}{64}x-\frac5{32}x^2+\frac5{448}x^3}{\frac{625}{57344}\,x^2}
\right)^{\!1/4}\!.
\end{align}
They both have expansions $1+O(x)$ around $x=0$.
The conjugation $\sqrt{R_1R_2}\mapsto-\sqrt{R_1R_2}$ of $K_1$ 
equals $\frac58\,(-x^2/28)^{1/4}\,K_2$.
\begin{theorem}
The following identities hold in a neighborhood of $x=0$:
\begin{align}
\!\!\! \hpg32{\!-\frac1{28},\,\frac3{28},\,\frac{19}{28}}{\frac12,\;\frac34}{\Psi_4}
\! & = \frac{K_1}{P_4^{\,1/4}},\\  \label{eq:derb2}
\hpg32{\frac3{14},\,\frac5{14},\,\frac{13}{14}}{\frac34,\;\frac54}{\Psi_4}
\! & = \frac{P_4^{\,3/2}}{Q_4\,\sqrt{R_1}},\\
\hpg32{\frac{13}{28},\,\frac{17}{28},\,\frac{33}{28}}{\frac{5}4,\;\frac{3}2}{\Psi_4}
\! & = \frac{P_4^{\,13/4}K_2}{Q_4^2\,R_1}.
\end{align}
\end{theorem}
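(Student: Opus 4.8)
The plan is to adapt the argument of Theorem~\ref{th:case3a}, exploiting the feature stressed just before the statement: the point $x=0$ lies over $z=0$ at the \emph{unramified} place of the block $4^42^21$ in the branching pattern (\ref{eq:brpat4}), so after the pull-back the three solutions keep pairwise distinct local exponents modulo $\ZZ$ at $x=0$. First I would recognise the three left-hand $\hpgo32$-series as the Frobenius basis at $z=0$ of a single type (4A) equation $E_0$ of the form (\ref{eq:hpgde}). Comparing the lower parameter pairs $\frac12,\frac34$; $\frac34,\frac54$; $\frac54,\frac32$ with the three local solutions at $z=0$ recorded in \S\ref{sec:hgp} identifies $E_0$ as the equation with $\ib_1=\frac34$, $\ib_2=\frac12$, whose local exponents at $z=0$ are $0,\frac14,\frac12$; the prefactors $z^{1-\ib_1}=z^{1/4}$ and $z^{1-\ib_2}=z^{1/2}$ of the second and third solutions are precisely what the right-hand powers of $P_4,Q_4,R_1$ absorb.

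Next I would pull $E_0$ back along $z\mapsto\Psi_4$ from (\ref{eq:darbcov4}), obtaining a third order Fuchsian equation $E_1$ on $\PP^1$. Because $\Psi_4$ is a Darboux covering reducing $\KG$ to $D_4$, the monodromy of $E_1$ is a central extension of $D_4$, hence dihedral, and splits---as in the proof of Lemma~\ref{th:locexps}---as a one-dimensional invariant subspace plus a two-dimensional one. Since $\Psi_4\sim-\frac{343}{32}x$ near $x=0$ (the zero is simple), the exponents $0,\frac14,\frac12$ of $E_0$ at $z=0$ are inherited unchanged at $x=0$ and remain distinct modulo $\ZZ$. The factorisations $\Psi_4=-343\,x\,R_1^2Q_4^4/(32P_4^7)=1-R_2Q_8^2/P_4^7$ then let me read off the divisors of $\Psi_4$ and $\Psi_4-1$, and thus the complete local exponent data at the three singular fibres $z=0$ ($4^42^21$), $z=1$ ($2^81^5$) and $z=\infty$ ($7^3$), from the multiplicities of $x,R_1,Q_4,R_2,Q_8,P_4$.

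I would then solve $E_1$ explicitly---for instance with Maple's dsolve, exactly as for (\ref{eq:samede})--(\ref{eq:samede2}) in Theorem~\ref{th:case3a}. The one-dimensional subspace gives the radical solution $x^{1/4}P_4^{-1/4}$ (a pure product of powers), while the two-dimensional subspace gives the conjugate dihedral pair $P_4^{-1/4}K_1$ and $x^{1/2}P_4^{-1/4}K_2$, whose quadratic branch is $g=R_1R_2$---the product of the polynomial $R_1$ carrying the doubled fibre over $z=0$ with the polynomial $R_2$ carrying the simple fibre over $z=1$---which is exactly the radicand inside $K_1$ and $K_2$. The assignment of each solution to its closed form is then forced by Lemma~\ref{th:locexps}: since the exponents $0,\frac14,\frac12$ are distinct modulo $\ZZ$, each is realised by exactly one basis solution, so comparing leading behaviour at $x=0$ matches the $\frac14$-exponent (radical) solution with the second series and the $0$- and $\frac12$-exponent (dihedral) solutions with the first and third series. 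Dividing out the powers $\Psi_4^{1/4}$ and $\Psi_4^{1/2}$ recovers precisely the stated right-hand sides $K_1/P_4^{1/4}$, $P_4^{3/2}/(Q_4\sqrt{R_1})$ and $P_4^{13/4}K_2/(Q_4^2R_1)$. Finally I would fix the scalars by evaluating at $x=0$: each left-hand $\hpgo32$-series equals $1$ at $\Psi_4(0)=0$, while $K_1,K_2=1+O(x)$ and $P_4,Q_4,R_1$ have constant term $1$, so both sides of every identity equal $1$ there and must coincide.

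I expect the main obstacle to be the explicit resolution of $E_1$ and, within it, the identification of the two dihedral solutions: one must confirm that the radicand is $R_1R_2$, that the outer exponent is $\frac14$, and that the inner data of $K_1$ and $K_2$ are exactly those forced by the prescribed exponents over $4^42^21$, $2^81^5$ and $7^3$---equivalently, that the candidate solutions read off from the divisors of $\Psi_4$ and $\Psi_4-1$ genuinely satisfy $E_1$. The genus-zero Darboux curve and the clean factorisations of $\Psi_4$ and $\Psi_4-1$ are what make this verification tractable; alternatively, these evaluations are linked, through the quadratic transformation (\ref{eq:t32a}) used to build $\Psi_4$ via the fiber product (\ref{eq:qua47}), to the type (7) case on the curve $H_7$.
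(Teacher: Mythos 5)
Your proposal is correct and follows essentially the same route as the paper: pull back the type (4A) equation along $\Psi_4$, observe (via {\sf dsolve} or direct verification) that the resulting third order equation is solved by the radical function $x^{1/4}$ and the conjugate dihedral pair with radicand $R_1R_2$, then match these to the three companion $\hpgo32$-solutions by their distinct local exponents $0,\frac14,\frac12$ at $x=0$ (Lemma~\ref{th:locexps}) and normalize at $x=0$. The paper states this more tersely for the first identity only, with the exponent-matching argument supplied in the paragraph preceding the theorem, but the substance is identical.
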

\begin{proof}
The third order linear differential equation for 
\begin{equation} \label{eq:hpgt4}
P_4^{1/4}\,\hpg32{\!-\frac1{28},\,\frac3{28},\,\frac{19}{28}}{\frac12,\;\frac34}{\Psi_4}.
\end{equation}
is satisfied by the following 3 functions:
\begin{align} \label{eq:hpgt4s}
x^{1/4}, \qquad 
\textstyle  \left( \pm \sqrt{R_1R_2}+1-\frac{45}{64}x+\frac5{32}x^2-\frac5{448}x^3\right)^{\!1/4}. 
\end{align}
This can be established by deriving the differential equation for the 3 functions,
and then checking it for (\ref{eq:hpgt4}). Alternatively, deriving the equation for (\ref{eq:hpgt4}) is lengthier,
but {\sf Maple}'s routine {\sf dsolve} provides the solutions in (\ref{eq:hpgt4s}).  
\end{proof}
Surely, the more attractive are purely radical Darboux evaluations as in (\ref{eq:derb2}).
Here are two more examples:
\begin{align}
\hpg32{\!-\frac1{14},\,\frac3{14},\,\frac{5}{14}}{\frac14,\;\frac34}{\Psi_4}
\! & = \frac{\sqrt{R_1}}{\sqrt{P_4}},\\
\hpg32{\frac3{14},\,\frac{5}{14},\,\frac{13}{14}}{\frac14,\;\frac34}{\Psi_4}
\! & = \frac{P_4^{3/2}\,R_1}{Q_8}\left( 1-\frac14\,x^2+\frac1{28}\,x^3 \right).
\end{align}
They can be found rather quickly by dividing out a befitting $\hpgo32$-function
by finitely many possibilities of predictable (by Riemann's $P$-symbols) powers of 
the irreducible polynomials in (\ref{eq:derbp4})--(\ref{eq:derbr2}),
and checking which combination gives the power series that appears 
to be a polynomial of predictable degree. 

\subsection{The case (4B)}

As with degree 24 Darboux evaluations of type (4B) hypergeometric functions in \cite[\S 6]{Vid18b},
the Darboux evaluations of degree 21 appear to always require extraneous factors to 
``basic" radical or dihedral expressions.
The following identities (around $x=0$) are obtained after lengthy computations and simplifications:
\begin{align}
\hpg32{\!-\frac3{28},\,\frac1{28},\,\frac{9}{28}}{\frac14,\;\frac12}{\Psi_4}
\! = & \; \frac{\frac12\sqrt{R_1R_2}+\frac12-\frac{15}{32}x+\frac{53}{448}x^2-\frac1{112}x^3}
{P_4^{\,3/4}K_1}, \\
\hpg32{\frac{11}{28},\,\frac{15}{28},\,\frac{23}{28}}{\frac{3}4,\;\frac{3}2}{\Psi_4}
\! = & \; \frac{P_4^{\,11/4}}{Q_4^2R_1}  \,
\frac{(\sqrt{R_1R_2}-1+\frac{15}{16}x-\frac{53}{224}x^2+\frac1{56}x^3)}{\frac{15}{64}\,x\,K_2}, \\
\hpg32{\frac9{14},\,\frac{11}{14},\,\frac{15}{14}}{\frac54,\;\frac74}{\Psi_4}
\! = & \; \frac{P_4^{\,9/2}}{Q_4^3R_1^{3/2}} \left( 1-\frac{3}{14}\,x \right).
\end{align}
Here are two other examples of the more attractive radical evaluations:
\begin{align}
\hpg32{\frac1{14},\,\frac{9}{14},\,\frac{11}{14}}{\frac34,\;\frac54}{\Psi_4}
\! = & \; \frac{\sqrt{P_4}}{Q_4\sqrt{R_1}} \left( 1-\frac{3}{5}x+\frac18x^2-\frac1{112}x^3 \right), \\
\hpg32{-\frac3{14},\,\frac{1}{14},\,\frac{9}{14}}{\frac14,\;\frac34}{\Psi_4}
\! = & \; \frac{\sqrt{R_1}}{P_4^{3/2}} \left( 1-\frac{3}{4}x+\frac14x^2-\frac3{112}x^3 \right).
\end{align}

\subsection{The cases (7A) and (7B)}
\label{sec:dc277}

As discussed in \S \ref{sec:ourcovs},  the Darboux curve for the types (7A), (7B)
is the genus 2 curve $H_7$ given by 
\begin{equation} \label{eq:darbc7a} \textstyle
v^2=u \, \big(1-\frac{63}4u+91u^2-231u^2+224u^4 \big).
\end{equation}
The degree 21 Darboux covering $\Psi_7$ is given by formulas (\ref{eq:darbcov7})--(\ref{eq:poly47}). 
Its principal divisor on $H_7$ is
\begin{align}
\mbox{div}(\Psi_7)=7U_1+7U_2+7U_3-7\widetilde{U}_1-7\widetilde{U}_2-7\widetilde{U}_3,
\end{align}
where the $u$-coordinates of $U_1,U_2,U_3$ satisfy $14u(2u-1)^2=1$.
The points $\widetilde{U}_1,\widetilde{U}_2,\widetilde{U}_3$ 
are obtained by the hyperelliptic involution $(u,v)\mapsto(u,-v)$.
This follows from the following divisors of the polynomial components:
\begin{align} \nonumber
\mbox{div}(2v-3u+14u^2) = & \;\textstyle (0,0)+(\frac14,-\frac1{16})+U_1+U_2+U_3-5{\cal O},\\
\mbox{div}(2v+3u-14u^2) = & \;\textstyle (0,0)+(\frac14,\frac1{16})
+\widetilde{U}_1+\widetilde{U}_2+\widetilde{U}_3-5{\cal O},\\
\mbox{div}(P_7)= & \;\textstyle (0,0)+7\,(\frac14,\frac1{16})-8{\cal O}. \nonumber
\end{align}
The Darboux or dihedral evaluations have to be evaluated at one of the points 
$U_1,U_2,U_3,\widetilde{U}_1,\widetilde{U}_2,\widetilde{U}_3$.
They are defined over $\QQ(\cos\frac{2\pi}7)$.
Therefore we settle for the following proposition. 
Handy Darboux evaluations of degree 24 for the types (7A), (7B) are given in \cite[\S 5]{Vid18b}.
\begin{propose} 
The following four functions $W_7,W_0,W_+,W_-$ on the hyperelliptic curve $H_7$ 
satisfy the same linear differential equation of order $3$:
\begin{align}
W_7 & =\sqrt{2v+3u-14u^2}\;P_7^{\,1/14}\,u^{-2/7}\,
\hpg32{\!-\frac1{14},\frac1{14},\frac5{14}}{\frac17,\,\frac57}{\Psi_7},\\
W_0 & =4u-1,\\ \label{eq:w7}
W_{\pm}  & = \left( P_1 \pm Q_1\,\sqrt{56u-21+\frac2u}\, \right)^{1/4}
\end{align}
with
\begin{align*}
P_1= &\,\textstyle 1344u^5-1540u^4+784u^3-204u^2+\frac{53}2u-\frac{11}8-3(28u^2-10u+1)v,\\
Q_1= &\,\textstyle -112u^4+84u^3-25u^2+\frac52u+(16u^2-4u+1)v.
\end{align*}
\end{propose}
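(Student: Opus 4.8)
The plan is to exhibit $W_0,W_+,W_-$ as a basis of solutions of the third order Fuchsian equation $E_1$ obtained by pulling back the type (7A) hypergeometric equation $(\ref{eq:hpgde})$ along the Darboux covering $\Psi_7$ and conjugating by the radical factor $\theta=\sqrt{2v+3u-14u^2}\,P_7^{\,1/14}\,u^{-2/7}$ that appears in $W_7$; then $W_7$ is automatically a solution of the same $E_1$ by the construction $(\ref{algtransf})$ of the pull-back.

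First I would make the pull-back explicit. Substituting $z=\Psi_7$ into $(\ref{eq:hpgde})$ with the type (7A) parameters and conjugating by $\theta$ produces a third order equation whose coefficients are rational functions on $H_7$. To compute them I would write $d/dz=(\partial\Psi_7/\partial u)^{-1}\big(\partial_u+(dv/du)\,\partial_v\big)$ on the curve, using the hyperelliptic relation $(\ref{eq:darbc7a})$ to reduce every occurrence of $v^2$ to a polynomial in $u$. Because $\Psi_7$ is a Darboux covering reducing $\KG$ to $D_4$, the monodromy of $E_1$ is a central, hence dihedral, extension of $D_4$; by the argument in the proof of Lemma $\ref{th:locexps}$ its solution space decomposes into a one-dimensional invariant subspace and a two-dimensional one, yielding exactly one radical solution together with a pair of dihedral solutions of the form $(p\pm q\sqrt g\,)^e$.

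Next I would match these basis solutions with the stated functions. The rational candidate $W_0=4u-1$ is confirmed to solve $E_1$ by a single substitution, equivalently by checking that its divisor is compatible with the local exponents read off from the branching $[7^3/7^3/2^81^5]$ of $\Psi_7$. For the dihedral pair, with $g=56u-21+2/u$, the efficient organization is through the symmetric functions $W_+^4+W_-^4=2P_1$ and $W_+^4W_-^4=P_1^2-Q_1^2g$, both of which lie in the function field of $H_7$ after reducing $v^2$ via $(\ref{eq:darbc7a})$. Thus $W_+^4$ and $W_-^4$ are the two roots of a quadratic over that field, and the inner radical $\sqrt g$ together with the outer fourth root propagates predictably through the three differentiations; the exponent $1/4$ and the explicit $P_1,Q_1$ are forced by matching local exponents at the fibers $\Psi_7=0,1,\infty$. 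This parallels the proof of Theorem $\ref{th:case3a}$, where the pulled-back equation was solved outright and its radical and dihedral solutions displayed.

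The hard part is the genus $2$ setting. Unlike the type (3A) case, where the Darboux curve is $\PP^1$ and {\sf Maple}'s {\sf dsolve} operates in a single coordinate, here every differentiation must respect $(\ref{eq:darbc7a})$, so all coefficients of $E_1$ and all verifications live in $\QQ(u)[v]$ modulo that relation. Carrying the nested radical $\sqrt g$ inside the quartic root of $W_\pm$ through three derivatives, while repeatedly reducing $v^2$ and clearing the denominator $u$ concealed in $g$, is the delicate computation; confining everything to the ring generated by $u,v,\sqrt g$ and the fourth roots, and using the hyperelliptic involution $(u,v)\mapsto(u,-v)$ that swaps the two points over each fiber of $\Psi_7$, is what makes the identities close up.
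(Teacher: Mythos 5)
Your proposal is correct in outline but takes a genuinely different route from the paper. The paper never forms or solves a differential equation on the genus~2 curve: it starts from the proof of Theorem~\ref{th:case3a}, where the pull-back along $\Psi_3$ (on $\PP^1$) has already been solved and the basis $\sqrt{2x-3}$, $Y_+$, $Y_-$ exhibited; it then replaces the $\hpgo32$-solution there by its companion at $\Psi_3=\infty$, and transports all four functions to $H_7$ through the cubic transformation (\ref{eq:hpg37c}) and the explicit parametrization (\ref{eq:cubpara37}) of the fiber product (\ref{eq:fp7721}). Under the substitution $x=x(u,v)$ the solutions $\sqrt{2x-3}$ and $Y_\pm$ become (after normalization and a lengthy but purely algebraic simplification) $W_0$ and $W_\pm$, the companion hypergeometric expression becomes $W_7$, and the common third order equation is simply the transport of the one already in hand. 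You instead pull the type (7A) equation back along $\Psi_7$ directly and verify the three radical and dihedral candidates on the curve; that is a legitimate, self-contained strategy, and your device of passing to the symmetric functions $W_+^4+W_-^4=2P_1$ and $W_+^4W_-^4=P_1^2-gQ_1^2$ so as to stay inside $\QQ(u,v)$ is the right way to organize the check. Two caveats: the computation is substantially heavier than the paper's, since every coefficient and derivative lives in $\QQ(u)[v]$ modulo (\ref{eq:darbc7a}); and your remark that the exponent $1/4$ and the polynomials $P_1,Q_1$ are ``forced by matching local exponents'' overstates what local data can give --- local exponents constrain divisors but do not determine the functions, so the substitution check you describe at the end is not a refinement but the whole content of the verification. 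What the paper's route buys is that $P_1$ and $Q_1$ emerge automatically from the change of variables rather than having to be confirmed ex post; what yours buys is independence from Theorem~\ref{th:case3a} and from the transformation formulas (\ref{eq:t32a})--(\ref{eq:t32c}).
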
 
\begin{proof} 
From the proof of Theorem \ref{th:case3a} we have the four functions in 
formulas (\ref{eq:samede})--(\ref{eq:samede2}) satisfying the same linear differential equation of order 3.
By switching the $\hpgo32$-function to a companion function,
we conclude that
\begin{equation} 
\sqrt{2x-3}, \  Y_+, \  Y_-, \ \Psi_3^{1/42} \left(x^3-7x+7\right)^{\!1/6}
\hpg32{\!-\frac1{42},\,\frac{13}{42},\,\frac{9}{14}}{\frac47,\;\frac67}{\frac1{\Psi_3}}
\end{equation}
satisfy the same equation.
We use quadratic transformation (\ref{eq:hpg37c}) and the parametrization (\ref{eq:cubpara37})
to arrive at the claimed functions after a lengthy simplification.
(We can recognize $R_1,R_2$ of \S \ref{sec:case4a} in (\ref{eq:w7}), (\ref{eq:darbc7a}) 
after the substitution $u=1/x$.)
\end{proof}

\small 

\bibliographystyle{alpha}

\end{document}